\newtheorem{lemma}{Lemma}
\newtheorem{theorem}{Theorem}
\newtheorem*{theorem_old}{Theorem}
\newtheorem*{corollary_old}{Corollary}
\newtheorem{corollary}{Corollary}
\theoremstyle{definition}
\newtheorem{remark}{Remark}
\DeclareMathOperator{\trdeg}{trdeg}
\DeclareMathOperator{\wronsk}{wr}
\begin{document}

\title{The primitive element theorem for differential fields with zero derivation on the ground field}

\author[me]{Gleb A. Pogudin}
\ead{pogudin.gleb@gmail.com}
\address[me]{Department of Higher Algebra, Faculty of Mechanics and Mathematics, Moscow State University, Leninskie Gory 1, GSP-1, Moscow,  119991, Russia}

\begin{abstract}

    In this paper we strengthen Kolchin's theorem (\cite{kolchin}) in the ordinary case.
    It states that if a differential field $E$ is finitely generated over a differential subfield $F \subset E$, $\trdeg_F E < \infty$, and $F$ contains a nonconstant, i.e. an element $f$ such that $f^{\prime} \neq 0$, then there exists $a \in E$ such that $E$ is generated by $a$ and $F$.
    We replace the last condition with the existence of a nonconstant element in $E$.

\end{abstract}

\begin{keyword}
differential field \sep primitive element \sep derivation

\MSC 12H05 \sep 13N15
\end{keyword}

\maketitle

\onehalfspacing

\section*{Introduction}

All fields considered in this paper are of characteristic zero.

Let $R$ be a ring.
A map $D\colon R \to R$ satisfying $D(a + b) = D(a) + D(b)$ and $D(ab) = aD(b) + D(a)b$ for all $a, b \in R$ is called \textit{derivation}.
We will denote $D(x)$ by $x^{\prime}$ and $D^n(x)$ by $x^{(n)}$.

A \textit{differential ring} $R$ is a ring with a specified derivation. 
A differential ring which is a field will be called a \textit{differential field}.
Let $F \subset E$ be a differential field extension and $a \in E$. 
Let us denote by $F\langle a \rangle$ the differential subfield of $E$ generated by $F$ and $a$.
If $F\langle a \rangle = E$, then element $a$ is said to be \textit{primitive}.

An element $a \in R$ of the differential ring $R$ is said to be \textit{constant} if $a^{\prime} = 0$.


Kolchin proved (\cite{kolchin}) a differential analogue of the primitive element theorem:

\begin{theorem_old}
    Let $E = F\langle a_1, \ldots, a_n \rangle$ and $\trdeg_F E < \infty$. 
    Assume also that $F$ contains a nonconstant element. 
    Then, there exists $b \in E$ such that $E = F \langle b \rangle$.
\end{theorem_old}

\begin{corollary_old}
    Let $E = F\langle a_1, \ldots, a_n \rangle$ and $\trdeg_F E < \infty$. 
    Assume also that $E$ contains a nonconstant element. 
    Then, there exist $b, c \in E$ such that $E = F \langle b, c \rangle$. 
\end{corollary_old}


\begin{remark}
    In \cite{kolchin} Kolchin considered a more general case, i.e. fields equipped with a set of commuting derivations. We restrict ourselves to the ordinary case.
\end{remark}


In \cite{bab} Babakhanian constructed primitive elements for several specific extensions $F \subset E$ with $F$ consisting of constant elements.

The goal of the present paper is to prove the primitive element theorem for the case $f^{\prime} = 0$ for all $f \in F$.


\section*{Main results}

\begin{theorem}\label{th:density}
    Let $E = k \langle a, b \rangle$, $\trdeg_k E < \infty$, and $b^{\prime} \neq 0$. 
    Then, there exists $p(x) \in k[x]$ such that $\trdeg_k k\langle a + p(b) \rangle = \trdeg_k k\langle a, b \rangle$.
\end{theorem}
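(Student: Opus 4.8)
The plan is to translate the statement into a cyclic-vector problem for a differential module and to settle it by a genericity argument, which also explains the label \emph{density}.

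Set $n = \trdeg_k E$. For any $c \in E$ one has $\trdeg_k k\langle c\rangle = \max\{\, r : c, c', \dots, c^{(r-1)} \text{ are algebraically independent over } k\,\}$, since any polynomial relation among $c, \dots, c^{(r)}$ differentiates into an expression of $c^{(r+1)}$ over $k(c,\dots,c^{(r)})$, making $k\langle c\rangle$ algebraic over $k(c,\dots,c^{(r-1)})$. As this maximum is at most $n$, it suffices to find $p$ for which $c = a + p(b)$ has $c, c', \dots, c^{(n-1)}$ algebraically independent. Since $k$ has characteristic zero, $E/k$ is separable and the module of K\"ahler differentials $\Omega := \Omega_{E/k}$ is an $n$-dimensional $E$-space in which elements $y_1, \dots, y_n$ are algebraically independent over $k$ exactly when $dy_1, \dots, dy_n$ are $E$-linearly independent. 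The derivation lifts to the additive operator $\nabla$ on $\Omega$ with $\nabla(x\,dy) = x'\,dy + x\,d(y')$, and then $d(c^{(j)}) = \nabla^{j}(dc)$. Thus the goal becomes: choose $p$ so that $dc$ is a \emph{cyclic vector}, i.e. $dc, \nabla dc, \dots, \nabla^{n-1}dc$ is an $E$-basis of $\Omega$.

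Now $dc = da + p'(b)\,db$, and as $p$ ranges over $k[x]$ its derivative $q := p'$ ranges over all of $k[x]$. Moreover, because $E = k\langle a,b\rangle$, the family $\{\nabla^{i}da\}\cup\{\nabla^{j}db\}$ spans $\Omega$, so $da,db$ generate $\Omega$ as a differential module. The problem is therefore to find $q \in k[x]$ making $\omega := da + q(b)\,db$ cyclic, and cyclicity is the non-vanishing of the generalized Wronskian $W := \omega \wedge \nabla\omega \wedge \dots \wedge \nabla^{n-1}\omega \in \bigwedge^{n}\Omega \cong E$. Writing $q(x) = \sum_{i=0}^{N} t_i x^i$ with $N$ large and $t_i \in k$ to be chosen, and using that the $t_i$ are constants (so $\nabla$ is $t_i$-linear), $W$ becomes a polynomial in $(t_0,\dots,t_N)$ with coefficients in $E$. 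Because $k$ is infinite, once $W$ is shown to be a nonzero polynomial it is nonzero on a Zariski-dense set of coefficient vectors in $k^{N+1}$; this yields the asserted density and, a fortiori, one admissible $p$.

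The crux is thus to prove $W \not\equiv 0$, equivalently that some $\omega$ of the constrained shape $da + q(b)\,db$ is cyclic. The cyclic vector theorem (applicable since $b' \neq 0$ provides a nonconstant in $E$) guarantees a cyclic vector of $\Omega$, but not one of this restricted form, and this is the main obstacle. A naive leading-order analysis in $q$ fails: from $\nabla^{j}\omega = \nabla^{j}da + \sum_{i=0}^{j}\binom{j}{i}\bigl(q(b)\bigr)^{(i)}\,\nabla^{j-i}db$ the term carrying the highest derivative of $q$ is $q^{(j)}(b)\,(b')^{j}\,db$, with $(b')^{j}\neq 0$; but these top terms for $j = 0,\dots,n-1$ are all proportional to the single vector $db$, so they wedge to zero and cannot by themselves force $W\neq 0$. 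The genuine argument must track the interaction of the two towers $\{\nabla^{i}da\}$ and $\{(q(b))^{(i)}\,\nabla^{j}db\}$, extract an actual $E$-basis of $\Omega$ from the spanning family $\{\nabla^{i}da,\nabla^{j}db\}$, and exhibit a nonzero minor; controlling the cancellations between the two towers, while using only that $da,db$ differentially generate the $n$-dimensional $\Omega$ and that $b' \neq 0$, is the technical heart of the proof. Everything preceding it is the routine reduction outlined above.
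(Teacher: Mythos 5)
There is a genuine gap, and you name it yourself: your text reduces the theorem to the non-vanishing of the wedge $W = \omega \wedge \nabla\omega \wedge \dots \wedge \nabla^{n-1}\omega$ for some $\omega = da + q(b)\,db$, observes that the naive leading-order argument fails (all top terms are proportional to $db$), and then states that "the genuine argument must track the interaction of the two towers" without carrying it out. But that non-vanishing \emph{is} the theorem; everything you prove is an equivalent reformulation in the language of K\"ahler differentials, and the Zariski-density step is only available \emph{after} $W$ is known to be a nonzero polynomial in the coefficients of $q$. So the proposal is an honest reduction plus an admission that the core is missing, not a proof.

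It is worth seeing how the paper fills exactly this hole, because the mechanism is different from what you anticipate. Rather than extracting a basis from the spanning family $\{\nabla^i da, \nabla^j db\}$ and exhibiting a nonzero minor, the paper adjoins formal variables $\Lambda_0, \Lambda_1, \ldots$ with $\Lambda_i' = b'\Lambda_{i+1}$ (so that $\Lambda_i \mapsto p^{(i)}(b)$ is a differential specialization for any fixed $p$), sets $c = a + \Lambda_0$, and takes an algebraic relation $Q(c, \ldots, c^{(n)}, \Lambda_0, \ldots, \Lambda_N)$ chosen minimal in a specific lexicographic sense ($\deg_{c^{(n)}} Q$, then $N$, then $\deg_{\Lambda_N} Q$). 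The minimality forces $N = n$ and $\partial Q/\partial \Lambda_n \neq 0$, whence the differential polynomial $T = \sum_{i=0}^{n} \Lambda_{i+1}\,\partial Q/\partial \Lambda_i$ (which is precisely the obstruction to your $W$ vanishing after specialization, i.e.\ the derivative with respect to $b$ of the induced algebraic relation) is nonzero as a polynomial in the $\Lambda_i$ over $E$. The non-vanishing after substituting $\Lambda_i = p^{(i)}(b)$ is then supplied by a lemma of Ritt: a nonzero differential polynomial over $E$ does not vanish at $x = p(f)$ for \emph{some} $p \in \mathbb{Q}[t]$, provided $f$ is nonconstant (applied with the rescaled derivation $\tilde D = \frac{1}{b'}D$, under which $\tilde D\Lambda_i = \Lambda_{i+1}$). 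This Ritt lemma is the "density" ingredient your sketch needs and does not have. A further small point: your coefficients $t_i$ range over $k$ and you assume they are constants, but in this theorem $k$ is not assumed to have zero derivation; the paper avoids the issue because Ritt's lemma produces $p \in \mathbb{Q}[x]$, whose coefficients are automatically constant, and you would need the same restriction.
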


\begin{theorem}\label{th:primitive}
    Let $E = k\langle a_1, \ldots, a_m \rangle$, $\trdeg_k E < \infty$, and $E$ contains a nonconstant. 
    Then, there exists $a \in E$ such that $E = k\langle a \rangle$.
\end{theorem}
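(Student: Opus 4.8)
The plan is to prove Theorem~\ref{th:primitive} by reducing to the two-generator situation governed by Theorem~\ref{th:density} and then upgrading the conclusion from transcendence degrees to fields. First I would dispose of the case in which $k$ itself contains a nonconstant: there Kolchin's theorem applies verbatim and produces a primitive element, so from now on I assume every element of $k$ is constant. In that situation any element of $E$ algebraic over $k$ is forced to be constant (in characteristic zero, differentiating a minimal polynomial over a field of constants gives $m'(b)b' = 0$ with $m'(b) \neq 0$), so the nonconstant guaranteed in $E$ is automatically transcendental over $k$; in particular $\trdeg_k E \geq 1$.

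Next I would cut the number of generators down to two. Fix a nonconstant $b \in E$; since $b \in E$ we may enlarge the generating set and assume $b$ is among the generators. Because $k\langle b\rangle$ now contains the nonconstant $b$, Kolchin's theorem applied over the ground field $k\langle b\rangle$ (this is exactly the Corollary quoted above) yields $E = k\langle b, c\rangle$ for a single $c$, with $b' \neq 0$. More generally I would run an induction on the number of generators: carrying the nonconstant $b$ along, it suffices to merge $b$ with one further generator into a single element, which is necessarily nonconstant and lets me peel generators off one at a time. Thus everything rests on the two-generator case.

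The heart of the matter is therefore $E = k\langle b, c\rangle$ with $b' \neq 0$, and here I would look for a primitive element of the shape $a = c + p(b)$ with $p \in k[x]$. Theorem~\ref{th:density} immediately supplies a $p$ for which $\trdeg_k k\langle a\rangle = \trdeg_k E$; since $c = a - p(b)$ lies in $k\langle a, b\rangle$, this already gives $E = k\langle a, b\rangle$ with $b$ algebraic over $k\langle a\rangle$. What remains --- and what I expect to be the main obstacle --- is to promote equality of transcendence degrees to equality of fields, i.e.\ to force $b \in k\langle a\rangle$. This genuinely requires choosing $p$ with care: matching transcendence degrees is not by itself enough, as one already sees with $t' = 1$, $s' = s$, where $a = t + s^2$ has full transcendence degree but $k\langle a\rangle = k(t, s^2) \subsetneq k(t, s)$.

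To overcome this I would try to recover $b$ from $a$ and its derivatives. Differentiating $a = c + p(b)$ repeatedly gives $a^{(j)} = c^{(j)} + p^{(j)}(b)(b')^{j} + (\text{terms of lower order in the derivatives of } p)$, so for a polynomial $p$ of large degree the coefficients of $p$ enter these relations in a controlled, essentially triangular fashion. The plan is to exhibit a Wronskian-type determinant in $a, a', a'', \ldots$ --- the operator $\wronsk$ is tailored to this --- whose nonvanishing is equivalent to solvability for $b$ over $k\langle a\rangle$, and to show that this determinant is a nonzero polynomial in the coefficients of $p$. A density argument of the same flavour as the one behind Theorem~\ref{th:density} would then let me pick $p$ avoiding simultaneously the locus where the transcendence degree drops and the locus where $b$ cannot be recovered, giving $k\langle a\rangle = E$ and completing the induction. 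The delicate point throughout is controlling how $b'$ and the higher derivatives of $b$ interact algebraically, which is exactly where the hypothesis $b' \neq 0$ is indispensable.
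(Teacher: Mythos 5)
Your reductions are sound and in fact track the paper's own opening moves: dispose of the case where $k$ contains a nonconstant via Kolchin's theorem; observe that otherwise every element of $E$ algebraic over $k$ is constant; cut down to $E = k\langle b, c\rangle$ with $b' \neq 0$ by applying Kolchin over $k\langle b\rangle$; and invoke Theorem~\ref{th:density} to get $a = c + p(b)$ with $\trdeg_k k\langle a \rangle = \trdeg_k E$, hence $E = k\langle a, b\rangle$ with $b$ algebraic over $k\langle a\rangle$. You also correctly identify (with a correct example, $k\langle t + s^2\rangle = k(t, s^2)$) that the whole difficulty is to force $b \in k\langle a\rangle$. But at exactly this point --- which is the actual content of the theorem --- your proposal stops being a proof and becomes a plan: you posit a ``Wronskian-type determinant'' in $a, a', a'', \ldots$ whose nonvanishing ``is equivalent to solvability for $b$ over $k\langle a\rangle$'' and which ``is a nonzero polynomial in the coefficients of $p$,'' but you construct no such determinant, prove no such equivalence, and give no argument for nonvanishing. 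Recovering $b$ from $a$ is a nonlinear elimination problem over a differential field, not a question of linear dependence over constants, so it is not even clear what Wronskian you would form; nothing in Theorem~\ref{th:density} or its proof supplies this step.

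The paper fills this gap with genuinely different machinery, and the difference in the shape of the candidate element matters. It does not seek a primitive element of the form $c + \tilde{p}(b)$, a polynomial in the nonconstant; having $E = k\langle a, b\rangle$ with $\trdeg_k k\langle a\rangle = n$ full, it seeks one of the form $b + \lambda_1 a + \cdots + \lambda_{n+2}a^{n+2}$ with constants $\lambda_i \in k$, i.e.\ a polynomial in the \emph{full transcendence degree} element. Primitivity is then characterized by Kolchin's criterion (Lemma~\ref{lem:invariant}: no nontrivial isomorphism of $E$ over $k$ fixes the element), the bad solutions are organized into the essential prime components of the perfect differential ideal $I = \{R(x), Q(x,y), t_1', \ldots, t_{n+2}', b - y + \sum t_i(a^i - x^i)\}$, and the Wronskian lemmas (Lemmas~\ref{lem:trdeg_estim1} and \ref{lem:trdeg_estim2}) bound $\trdeg_k k\langle a, b, \overline{x}, \overline{y}, \overline{t}_1, \ldots, \overline{t}_{n+2}\rangle \leqslant 2n+1$, forcing an algebraic dependence among the $\overline{t}_i$ over $E$, so that generic constant $\lambda_i$ work. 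This counting closes precisely because $Q$ has order zero in $y$ (i.e.\ $b$, and likewise $\overline{y}$, is algebraic over the field generated by $a$, resp.\ $\overline{x}$), which in turn rests on $a$ having full transcendence degree. If one instead perturbs by polynomials in the nonconstant $b$, as you propose, the analogous estimate acquires an error of roughly $n - \trdeg_k k\langle b\rangle$ (the conjugates of $c$ are no longer algebraic over those of $b$), and since $\trdeg_k k\langle b\rangle$ can be as small as $1$ while $n$ is large, the inequality no longer yields the needed dependence. So your final step is not a routine ``density argument of the same flavour'': it is missing, and the natural transplant of the paper's argument to your family of candidates does not go through.
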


\begin{remark}
    Unlike Kolchin's proof it is not sufficient to consider elements of the form $a + \lambda b$ ($\lambda \in k$). 
    For example, let $\mathbb{Q}(x, y)$ be a differential field with the derivation defined by $x^{\prime} = 1$ and $y^{\prime} = 0$. 
    There is no primitive element of the form $y + \lambda x$ ($\lambda \in \mathbb{Q}$), but $\mathbb{Q}(x, y) = \mathbb{Q} \langle x^2 + y \rangle$.
\end{remark}


\begin{proof}[Proof of Theorem \ref{th:density}]

    We will need the following well-known lemmas:
    \begin{lemma}
        If $\trdeg_k k\langle a \rangle = n$, then $k\langle a \rangle = k\left( a, a^{\prime}, \ldots, a^{(n)} \right)$.
    \end{lemma}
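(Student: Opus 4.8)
The plan is to identify the smallest index at which the successive derivatives of $a$ cease to be algebraically independent, and then to show that the field they generate is already closed under the derivation. Throughout, write $F_j = k\left(a, a', \ldots, a^{(j)}\right)$ and $F_{-1} = k$, so that $F_{-1} \subseteq F_0 \subseteq F_1 \subseteq \cdots$ is an increasing chain whose union is $k\langle a\rangle$ and whose transcendence degrees over $k$ are non-decreasing and bounded by $n$. Since the transcendence degree cannot increase indefinitely, there is a smallest $N \geq 0$ for which $a^{(N)}$ is algebraic over $F_{N-1}$; by minimality $a, a', \ldots, a^{(N-1)}$ are algebraically independent over $k$, so $\trdeg_k F_{N-1} = \trdeg_k F_N = N$.

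The main step is to differentiate the algebraic relation satisfied by $a^{(N)}$. Let $G(y) = \sum_i g_i y^i \in F_{N-1}[y]$ be the minimal polynomial of $a^{(N)}$ over $F_{N-1}$. Each coefficient $g_i$ is a rational expression in $a, \ldots, a^{(N-1)}$, so its derivative $g_i'$ is a rational expression in $a, \ldots, a^{(N)}$ and hence already lies in $F_N$; note that I do not claim $F_{N-1}$ itself is a differential field. Applying the derivation to $G\left(a^{(N)}\right) = 0$ gives
\[
    0 = \sum_i g_i' \left(a^{(N)}\right)^i + G_y\left(a^{(N)}\right) a^{(N+1)},
\]
where $G_y$ denotes the formal derivative of $G$ in $y$. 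Because the characteristic is zero and $G$ is irreducible, $a^{(N)}$ is a simple root and $G_y\left(a^{(N)}\right) \neq 0$, so solving for $a^{(N+1)}$ exhibits it as an element of $F_N$. I expect this step---placing the next derivative inside the field $F_N$ itself rather than merely in its algebraic closure---to be the crux, and it is precisely here that separability (char $0$) is needed.

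Once $a^{(N+1)} \in F_N$ is known, the chain-rule expansion of the derivative of a rational function shows that $F_N$ is stable under the derivation, since its generators $a, a', \ldots, a^{(N)}$ have derivatives $a', \ldots, a^{(N+1)}$ all lying in $F_N$. Hence $F_N$ is a differential field containing $k$ and $a$; as $F_N \subseteq k\langle a\rangle$ and $k\langle a\rangle$ is the smallest differential field containing $k$ and $a$, we conclude $k\langle a\rangle = F_N$. Comparing transcendence degrees now gives $n = \trdeg_k k\langle a\rangle = \trdeg_k F_N = N$, so $N = n$ and $k\langle a\rangle = F_n = k\left(a, a', \ldots, a^{(n)}\right)$, as required. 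The degenerate case in which $a$ is algebraic over $k$ is $N = 0$ and is covered verbatim by the same computation with $F_{-1} = k$.
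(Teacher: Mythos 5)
Your proof is correct and follows essentially the same route as the paper: locate the first derivative $a^{(N)}$ that is algebraic over its predecessors, differentiate the algebraic relation to show $a^{(N+1)} \in k\left(a, \ldots, a^{(N)}\right)$, and conclude that this field is closed under the derivation. Your use of the minimal polynomial and separability to guarantee $G_y\left(a^{(N)}\right) \neq 0$ makes explicit a nonvanishing step that the paper leaves implicit, but the underlying argument is the same.
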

    \begin{proof}
        Let $m$ be the minimal integer such that $a, \ldots, a^{(m)}$ are algebraically dependent over $k$. 
        Let $R(a, \ldots, a^{(m)}) = 0$ be the corresponding algebraic relation. 
        Hence 
        
        $$0 = \left( R(a, \ldots, a^{(m)}) \right)^{\prime} = \sum\limits_{i = 0}^{m} a^{(i + 1)} \frac{\partial}{\partial a^{(i)}} R$$
        
        , so $a^{(m + 1)} \in k(a, \ldots, a^{(m)})$.

        Similarly we obtain that $a^{(N)} \in k(a, \ldots, a^{(m)})$ for all $N$. 
        Hence, $n = m$ and $k\langle a\rangle = k\left( a, \ldots, a^{(n)} \right)$.
    \end{proof}

    \begin{lemma}[\cite{ritt}, p.35]\label{lem:nonzero}
        Let $q(x, x^{\prime}, \ldots, x^{(n)})$ be a nonzero differential polynomial over a differential field $E$. 
        Let $f \in E$ be a noncostant element. 
        Then, there exists $p(t) \in \mathbb{Q}[t]$ such that $$\left.q(x, x^{\prime}, \ldots, x^{(n)})\right\rvert_{x = p(f)} \neq 0$$
    \end{lemma}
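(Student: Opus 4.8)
The plan is to reduce the statement to the non-vanishing of a single ordinary polynomial in auxiliary variables, and then to pull back a rational point. Introduce formal variables $\lambda_0, \ldots, \lambda_d$ and, for $0 \le i \le n$, the linear forms
$$\ell_i(\lambda_0, \ldots, \lambda_d) = \sum_{j=0}^d \lambda_j\,(f^j)^{(i)} \in E[\lambda_0, \ldots, \lambda_d],$$
which is exactly the $i$-th derivative of $\sum_j \lambda_j f^j$ if the $\lambda_j$ are regarded as constants. Set $P := q(\ell_0, \ldots, \ell_n) \in E[\lambda_0, \ldots, \lambda_d]$. The key point is that if $P \ne 0$ for some $d$, then, being a nonzero polynomial over a field containing $\mathbb{Q}$, it cannot vanish on all of $\mathbb{Q}^{d+1}$ (an easy induction on the number of variables), so there are $c_0, \ldots, c_d \in \mathbb{Q}$ with $P(c_0, \ldots, c_d) \ne 0$. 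Taking $p(t) = \sum_j c_j t^j$ and using $c_j^{\prime} = 0$ gives $(p(f))^{(i)} = \ell_i(c_0,\ldots,c_d)$, whence $\left.q\right\rvert_{x = p(f)} = P(c_0,\ldots,c_d) \ne 0$, as desired.

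It remains to secure $P \ne 0$ for large $d$. Since $q$ is a nonzero differential polynomial, this follows once $\ell_0, \ldots, \ell_n$ are algebraically independent over $E$; being linear forms, they are algebraically independent exactly when they are $E$-linearly independent, i.e. when the matrix $M = \bigl((f^j)^{(i)}\bigr)_{0 \le i \le n,\ 0 \le j \le d}$ has rank $n+1$. An $E$-linear relation among the rows, $\sum_{i=0}^n e_i\,(\text{row } i) = 0$ holding in every column $j$, says precisely that the differential operator $L = \sum_{i=0}^n e_i D^i$ annihilates every power $f^j$. So the heart of the matter is the following claim: no nonzero operator $L = \sum_{i=0}^n e_i D^i$ with $e_i \in E$ can annihilate all of $1, f, f^2, \ldots$.

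I would prove this claim by a dimension count on the solutions of a linear ODE, and this is where the hypothesis $f^{\prime} \ne 0$ is used essentially. First, a nonconstant $f$ is transcendental over the field of constants $C$ of $E$: a minimal polynomial relation over $C$ would, upon differentiation (the coefficients being constant), force $f^{\prime} = 0$. Hence $1, f, \ldots, f^m$ are $C$-linearly independent for every $m$. Now if such an $L \ne 0$ existed, with $m := \operatorname{ord} L \le n$, then $1, f, \ldots, f^m$ would be $m+1$ solutions of $Ly = 0$ that are linearly independent over $C$, so their Wronskian $\wronsk(1, f, \ldots, f^m)$ would be nonzero; this contradicts the standard fact that the Wronskian of $m+1$ solutions of an order-$m$ linear equation vanishes, since the top row of derivatives is an $E$-combination of the lower rows. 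This establishes the claim, so the rows of $M$ are $E$-linearly independent, some finite set of columns gives a nonzero $(n+1)$-minor, and $M$ has rank $n+1$ for $d$ large enough.

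The main obstacle is precisely the claim above; the remaining ingredients — the reduction to $P$, the equivalence of linear and algebraic independence for linear forms, and the specialization to a $\mathbb{Q}$-point — are routine. I expect the only delicate book-keeping to be tracking that the $\lambda_j$ may be treated as constants so that $\ell_i$ genuinely records $(p(f))^{(i)}$, and that $d$ is chosen large enough to realize the full rank of the infinite coefficient matrix.
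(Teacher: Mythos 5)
The paper never proves Lemma~\ref{lem:nonzero}: it is quoted from Ritt (\cite{ritt}, p.~35) and used as a black box, so there is no in-paper argument to compare against; your proposal stands or falls on its own correctness, and it is correct. The reduction to showing $P = q(\ell_0, \ldots, \ell_n) \neq 0$ is sound: rational numbers are constants in any differential field of characteristic zero, so the specialization $\lambda_j \mapsto c_j$ really does turn $\ell_i$ into $(p(f))^{(i)}$, and a nonzero polynomial over a field containing $\mathbb{Q}$ cannot vanish on all of $\mathbb{Q}^{d+1}$. The heart of your argument --- that no nonzero operator $L = \sum_i e_i D^i$ of order $m \leqslant n$ can annihilate all powers of $f$ --- is also right, and it is exactly where nonconstancy enters: $f' \neq 0$ forces $f$ to be transcendental over the constants $C$, hence $1, f, \ldots, f^m$ are $C$-linearly independent, hence their Wronskian is nonzero by the standard criterion (the same fact, \cite[prop. 2.8]{magid}, that the paper itself invokes inside the proof of Lemma~\ref{lem:cramer}), while satisfying an order-$m$ equation forces that Wronskian to vanish. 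Two minor tightenings: (i) the ``$d$ large enough'' bookkeeping and the rank-stabilization discussion are unnecessary, since your Wronskian argument applied to a row relation of top index $m$ among the rows of $\bigl((f^j)^{(i)}\bigr)_{0 \leqslant i, j \leqslant n}$ shows this square matrix is already nonsingular, so $d = n$ suffices; (ii) the step ``linearly independent linear forms are algebraically independent'' is correct but merits its one-line justification (complete them to a basis of the space of linear forms, i.e., perform a linear change of coordinates). Neither point is a gap; the proof is complete as written.
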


    Without loss of generality, we can assume that $E = k\langle a, b\rangle$. 
    Let us introduce algebraically independent variables $\Lambda_0, \Lambda_1, \ldots$. 
    We extend the derivation from $E$ to $E[\Lambda_0, \Lambda_1, \ldots]$ by $\left( \Lambda_i \right)^{\prime} = b^{\prime} \Lambda_{i + 1}$. 
    This construction can be explained by the following observation: let us fix a polynomial $p(x) \in \mathbb{Q}[x]$; the formulas $\varphi_p(\Lambda_i) = p^{(i)}(b)$ define a homomorphism of differential $k$-algebras $\varphi_p\colon E[\Lambda_0, \Lambda_1, \ldots] \to E$.

    Let $c = a + \Lambda_0$ and $K = k(\Lambda_0, \Lambda_1, \ldots) \subset E(\Lambda_0, \Lambda_1, \ldots)$. 
    Since $K\langle c \rangle \subset K\langle a , b \rangle$, $\trdeg_K K\langle c \rangle = n < \infty$. 
    Let nonzero $R(x_0, \ldots, x_n) \in K[x_0, \ldots, x_n]$ satisfy $R(c, c^{\prime}, \ldots, c^{(n)}) = 0$. 
    Notice that it depends on $x_n$. 
    Multiplying by the suitable element of $k[\Lambda_0, \Lambda_1, \ldots]$, we obtain a polynomial in both $c, c^{\prime}, \ldots, c^{(n)}$ and $\Lambda_0, \ldots, \Lambda_N$ over $k$. 
    Let us denote it by $Q(c, \ldots, c^{(n)}, \Lambda_0, \ldots, \Lambda_N)$. 
    Moreover, we assume that $Q$ satisfies the following conditions:

    \begin{enumerate}
        \item $\deg_{c^{(n)}} Q$ is a minimal possible;

        \item under the above condition, $N$ is a minimal possible;

        \item under the above conditions, $\deg_{\Lambda_N} Q$ is minimal possible.
    \end{enumerate}

    \begin{lemma}
        $N = n$.
    \end{lemma}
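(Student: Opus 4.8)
The plan is to prove $N=n$ by establishing the two inequalities $N\ge n$ and $N\le n$ separately, in each case extracting a single top-degree coefficient from the relation $Q(c,\ldots,c^{(n)},\Lambda_0,\ldots,\Lambda_N)=0$ and reading it as a polynomial identity in $E[\Lambda_0,\Lambda_1,\ldots]$. The computation underlying everything is the expansion
\[
c^{(i)} = a^{(i)} + \Lambda_0^{(i)} = a^{(i)} + (b')^{i}\Lambda_i + \bigl(\text{a polynomial in } \Lambda_1, \ldots, \Lambda_{i-1}\bigr),
\]
obtained by an easy induction from $\Lambda_j' = b'\Lambda_{j+1}$ and valid because $b'\neq 0$; it shows that $c^{(i)}$ contains $\Lambda_i$ with nonzero coefficient $(b')^{i}$ and contains no $\Lambda_j$ with $j>i$. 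I will also use repeatedly that, by the first lemma above applied over $K=k(\Lambda_0,\Lambda_1,\ldots)$, the elements $c,c',\ldots,c^{(n-1)}$ are algebraically independent over $K$ while $c,\ldots,c^{(n)}$ are dependent; in particular every admissible $Q$ genuinely involves $c^{(n)}$.

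For $N\ge n$ I would argue by contradiction, assuming $N<n$. Writing $Q=\sum_{j=0}^{e}Q_j\,(c^{(n)})^{j}$ with $e=\deg_{c^{(n)}}Q\ge 1$, $Q_e\neq 0$, and $Q_j\in k[\Lambda_0,\ldots,\Lambda_N][x_0,\ldots,x_{n-1}]$ (evaluated at $x_i=c^{(i)}$), I observe that after substitution the variable $\Lambda_n$ can enter only through the factor $c^{(n)}=a^{(n)}+(b')^{n}\Lambda_n+\cdots$: each $Q_j$ is free of $\Lambda_n$ because $N<n$, and each $c^{(i)}$ with $i<n$ is free of $\Lambda_n$ by the expansion above. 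Since $(c^{(n)})^{j}$ has $\Lambda_n$-degree exactly $j$, the coefficient of $\Lambda_n^{e}$ in the identity is $(b')^{ne}\,Q_e(c,\ldots,c^{(n-1)})$, which must vanish; as $(b')^{ne}\neq 0$ this forces $Q_e(c,\ldots,c^{(n-1)})=0$. But $Q_e$ is a nonzero polynomial over $K$ in $c,\ldots,c^{(n-1)}$, contradicting their algebraic independence over $K$.

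For $N\le n$ I would again argue by contradiction, assuming $N>n$. Now $\Lambda_N$ occurs in $Q$ only explicitly, since every $c^{(i)}$ with $i\le n<N$ is free of $\Lambda_N$. Writing $Q=\sum_{j=0}^{d}P_j\,\Lambda_N^{j}$ with $d=\deg_{\Lambda_N}Q\ge 1$, $P_d\neq 0$, and $P_j\in k[x_0,\ldots,x_n,\Lambda_0,\ldots,\Lambda_{N-1}]$, and using that $\Lambda_N$ is transcendental over $E(\Lambda_0,\ldots,\Lambda_{N-1})$, I conclude that each coefficient vanishes after substitution, so in particular $P_d(c,\ldots,c^{(n)},\Lambda_0,\ldots,\Lambda_{N-1})=0$. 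If $P_d$ did not involve $c^{(n)}$, this would be a nontrivial algebraic relation among $c,\ldots,c^{(n-1)}$ over $K$, which is impossible; hence $\deg_{c^{(n)}}P_d\ge 1$, while $\deg_{c^{(n)}}P_d\le\deg_{c^{(n)}}Q$ because $P_d$ is a coefficient of $Q$. By the minimality of $\deg_{c^{(n)}}Q$ these are equal, so $P_d$ is an admissible relation achieving the minimal value of $\deg_{c^{(n)}}$ but with highest $\Lambda$-index at most $N-1$, contradicting the minimality of $N$. Combining the two inequalities gives $N=n$.

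The main obstacle is the bookkeeping of where each $\Lambda_i$ can appear: some occur explicitly as coefficients of $Q$, while others are hidden inside the $c^{(i)}$, and these two ranges overlap exactly up to index $n$. The structural formula $c^{(i)}=a^{(i)}+(b')^{i}\Lambda_i+\cdots$ — precisely where the hypothesis $b'\neq 0$ is used — is what lets me isolate one clean top-degree coefficient (in $\Lambda_n$ for the lower bound, in $\Lambda_N$ for the upper bound) and convert each case either into a forbidden algebraic relation among $c,\ldots,c^{(n-1)}$ or into a violation of the minimality conditions imposed on $Q$.
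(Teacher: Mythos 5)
Your proof is correct and follows essentially the same route as the paper: both arguments split into the cases $N<n$ and $N>n$, use the expansion $c^{(i)}=a^{(i)}+(b')^{i}\Lambda_i+(\text{terms in lower }\Lambda_j)$ to locate where each $\Lambda_i$ can occur, exploit transcendence of $\Lambda_N$ (resp.\ of the top $\Lambda_n$-coefficient) to force coefficients to vanish, and then contradict either the algebraic independence of $c,\ldots,c^{(n-1)}$ over $K$ or the minimality conditions imposed on $Q$. If anything, your treatment of the case $N<n$ is slightly more explicit than the paper's (you extract the coefficient of $\Lambda_n^{e}$ with its leading factor $(b')^{ne}$ rather than just citing that $c^{(n)}$ is transcendental and $Q$ depends on it), but the underlying idea is identical.
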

    \begin{proof}
        Assume that $N > n$.
        Let us rewrite $Q$ as a polynomial in $\Lambda_N$: $Q = q_m \Lambda_N^m + \ldots + q_0$, where $q_i$ are polynomials over $k$ in $c, \ldots, c^{(n)}, \Lambda_0, \ldots, \Lambda_{N - 1}$. 
        Since $N > n$, $c, \ldots, c^{(n)} \in E(\Lambda_0, \ldots, \Lambda_{N - 1})$ and $\Lambda_N$ is transcendental over $k(c, \ldots, c^{(n)}, \Lambda_0, \ldots, \Lambda_{N - 1})$. 
        Thus, $Q = 0$ implies $q_i = 0$ for all $i$. 
        We obtained a contradiction with minimality of $N$. 

        Assume that $N < n$. 
        Clearly, $c^{(n)} = (b^{\prime})^n \Lambda_n + c_0$, where $c_0 \in E(\Lambda_0, \ldots, \Lambda_{n - 1})$. 
        Thus, $c^{(n)}$ is transcendental over $k(c, \ldots, c^{(n - 1)}, \Lambda_0, \ldots, \Lambda_N) \subset E(\Lambda_0, \ldots, \Lambda_{n - 1})$. 
        But $Q$ depends on $c^{(n)}$. 
        This contradiction proves the lemma.
    \end{proof}

    \begin{lemma}\label{lem:part_dev_nonzero}
        $\frac{\partial}{\partial\Lambda_n} Q \neq 0$.
    \end{lemma}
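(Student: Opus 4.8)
The plan is to argue by contradiction and to reduce the claim to the mechanism already used in the previous lemma. Suppose $\frac{\partial}{\partial\Lambda_n} Q = 0$. Since all fields here have characteristic zero, a polynomial whose formal partial derivative in one variable vanishes cannot contain that variable; hence $\Lambda_n$ does not occur in $Q$, so that $Q$ involves, among the $\Lambda_i$, only $\Lambda_0, \ldots, \Lambda_{n-1}$, and $\deg_{\Lambda_n} Q = 0$.

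First I would observe that this immediately clashes with the equality $N = n$ proved above. Indeed, the same polynomial $Q$, with $c, \ldots, c^{(n)}$ substituted as before, now exhibits the relation using only $\Lambda_0, \ldots, \Lambda_{n-1}$. Its degree in $c^{(n)}$ is unchanged, so it still meets the first minimality requirement, while its highest $\Lambda$-index has dropped to at most $n-1 < n$. This contradicts the minimality of $N$.

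To keep the step self-contained, I would instead re-run the transcendence argument from the $N < n$ branch of the preceding lemma. Writing $c^{(n)} = (b^{\prime})^n \Lambda_n + c_0$ with $c_0 \in E(\Lambda_0, \ldots, \Lambda_{n-1})$ and using $b^{\prime} \neq 0$, the element $c^{(n)}$ is transcendental over $k(c, \ldots, c^{(n-1)}, \Lambda_0, \ldots, \Lambda_{n-1})$. Since $Q$ genuinely depends on its $c^{(n)}$-slot (so $\deg_{c^{(n)}} Q \geq 1$), the expression $Q(c, \ldots, c^{(n)}, \Lambda_0, \ldots, \Lambda_{n-1})$ is a nonzero polynomial of positive degree in the transcendental $c^{(n)}$, hence nonzero, contradicting the relation $Q = 0$.

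The only load-bearing points are the use of characteristic zero, to convert the vanishing of $\frac{\partial}{\partial\Lambda_n} Q$ into the genuine absence of $\Lambda_n$, and the fact, fixed when $R$ was chosen, that $Q$ depends on $c^{(n)}$; everything else merely echoes the previous lemma. I do not anticipate a real obstacle here, the only care being to keep the roles of the formal slots for the $c^{(i)}$ and of the variables $\Lambda_j$ apart.
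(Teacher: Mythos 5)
The statement you prove is weaker than the one the paper needs, and the difference is precisely the content of the lemma. You read the contradiction hypothesis $\frac{\partial}{\partial\Lambda_n} Q = 0$ as vanishing of the \emph{formal} partial derivative, and from it you conclude that $\Lambda_n$ does not occur in $Q$; so what you end up proving is only that $Q$ genuinely involves $\Lambda_n$ — which is already immediate from the definition of $N$ (the highest index occurring in $Q$) together with the previous lemma $N = n$. But in the way Lemma~\ref{lem:part_dev_nonzero} is used afterwards, $\frac{\partial}{\partial\Lambda_n} Q$ means the partial derivative \emph{after} substituting $c, c^{\prime}, \ldots, c^{(n)}$ into their slots, i.e.\ an element of $E[\Lambda_0, \ldots, \Lambda_n]$, and the claim is that this element is nonzero. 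That is exactly what the next step requires: $T = \sum_i \Lambda_{i+1}\frac{\partial}{\partial\Lambda_i}Q$ must be rewritable as a \emph{nonzero} polynomial in $\Lambda_0, \ldots, \Lambda_{n+1}$ over $k\langle a, b\rangle$ so that Lemma~\ref{lem:nonzero} can be applied, and this holds because the coefficient of $\Lambda_{n+1}$ in $T$ is the substituted $\frac{\partial}{\partial\Lambda_n}Q$. Formal nonvanishing cannot deliver this: the substitution $x_i \mapsto c^{(i)}$ kills nonzero polynomials — $Q$ itself is one — so knowing that $\frac{\partial}{\partial\Lambda_n}Q$ is formally nonzero says nothing about its image.

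Under the correct reading, the hypothesis to be contradicted is that the \emph{substituted} derivative vanishes; this does not imply that $\Lambda_n$ is absent from $Q$, so neither of your two arguments gets started. The paper's argument is instead: if the substituted $\frac{\partial}{\partial\Lambda_n}Q$ were zero, then $\frac{\partial}{\partial\Lambda_n}Q$ would itself be an algebraic relation; it is a nonzero polynomial because $\deg_{\Lambda_n} Q \geqslant 1$ and the characteristic is zero; it satisfies $\deg_{c^{(n)}}\frac{\partial}{\partial\Lambda_n}Q \leqslant \deg_{c^{(n)}}Q$; and it has either highest $\Lambda$-index strictly less than $n$, or $\deg_{\Lambda_n}$ strictly smaller than $\deg_{\Lambda_n}Q$. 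Either way this contradicts minimality conditions 2 and 3 in the choice of $Q$. To see that minimality must enter in this stronger way (and hence that your proof cannot be repaired without it), consider the relation $Q^2$ in place of $Q$: it still involves $\Lambda_n$, its formal derivative $2Q\frac{\partial}{\partial\Lambda_n}Q$ is a nonzero polynomial, yet its substituted value is zero because the substituted value of $Q$ is. Your argument applies verbatim to $Q^2$, which shows it only establishes the formal statement, not the one Theorem~\ref{th:density} requires.
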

    \begin{proof}
        It follows immediately from the minimality conditions for $Q$ and inequalities $\deg_{c^{(n)}} Q \geqslant \deg_{c^{(n)}} \frac{\partial}{\partial \Lambda_n} Q$ and $\deg_{\Lambda_n} Q > \deg_{\Lambda_n} \frac{\partial}{\partial\Lambda_n} Q > -\infty$.
    \end{proof}

    We return to the proof of Theorem~\ref{th:density}.

    Let $p(x) \in \mathbb{Q}[x]$. 
    Applying $\varphi_p$ to $Q(c, \ldots, c^{(n)}, \Lambda_0, \ldots, \Lambda_n)$, we obtain an algebraic dependence for $\varphi_p(\Lambda_0), \ldots, \varphi_p(\Lambda_n) \in \mathbb{Q}[b]$ over $k\left( \varphi_p(c), \ldots, \varphi_p (c^{(n)})\right)$. It yields to an algebraic dependence for $b$ over $k\left( \varphi_p(c), \ldots, \varphi_p(c^{(n)}) \right)$.
    The goal is to find an appropriate $p(x)$ to make this dependence nontrivial. 
    Let us compute its derivation with respect to $b$:

    \begin{multline} 
\frac{\partial}{\partial b} Q\left( \varphi_p(c), \ldots, \varphi_p(c^{(n)}), p(b), \ldots, p^{(n)}(b) \right) = \\ =\quad{} \sum\limits_{i = 0}^{n} \varphi_p \left( \frac{\partial}{\partial \Lambda_i} Q\right) p^{(i + 1)}(b) \quad{} = \quad \varphi_p \left( \sum\limits_{i = 0}^{n} \Lambda_{i + 1} \frac{\partial}{\partial \Lambda_i} Q \right) \end{multline}

    By Lemma \ref{lem:part_dev_nonzero} the polynomial $T = \sum\limits_{i = 0}^{n} \Lambda_{i + 1} \frac{\partial}{\partial\Lambda_i} Q$ is nonzero. 
    Since $c = a + \Lambda_0$, we can rewrite $T$ as a nonzero polynomial in $\Lambda_0, \ldots, \Lambda_{n + 1}$ over $k\langle a, b\rangle$. 
    Let us denote the derivation on $E$ by $D$. 
    Then, $\tilde{D} = \frac{1}{b^{\prime}} D$ is also a derivation on $E$. 
    Obviously, $\tilde{D} \Lambda_i = \Lambda_{i + 1}$.

    Hence, we can apply Lemma \ref{lem:nonzero} to the differential field $E$ with respect to $\tilde{D}$, nonconstant element $b \in E$, and the polynomial $T$ in variables $\Lambda_0, \ldots, \Lambda_n$. 
    Therefore, we obtain such $p(x)\in \mathbb{Q}[x]$ that $\varphi_p (T) \neq 0$. 

    Since $\varphi_p(c) = a + p(b)$ and $b$ are both algebraic over $k\langle \varphi_p(c) \rangle$, $a$ is also algebraic over $k\langle \varphi_p(c) \rangle$. 
    Hence, $\trdeg_k k\langle \varphi_p(c) \rangle = \trdeg k\langle a, b \rangle$.

\end{proof}

The following corollary can be derived using exactly the same argument as above. 

\begin{corollary}\label{cor:density}
    Let $E = k\langle a, b \rangle$, $\trdeg_k E < \infty$, $b^{\prime} \neq 0$ and $c\in k\langle b\rangle$. 
    Then, there exists $p(x) \in k[x]$ such that $\trdeg k\langle a + c \cdot p(b) \rangle = \trdeg k\langle a, b \rangle$.
\end{corollary}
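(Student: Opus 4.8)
The plan is to run the proof of Theorem~\ref{th:density} essentially verbatim, with the single substitution $c = a + \Lambda_0$ replaced by $w = a + c\Lambda_0$, where now $c$ denotes the given element of $k\langle a, b\rangle$. I may assume $c \neq 0$, since otherwise the claimed element is just $a$ and the statement carries no content. I introduce the same algebraically independent variables $\Lambda_0, \Lambda_1, \ldots$ with $\left(\Lambda_i\right)^{\prime} = b^{\prime}\Lambda_{i+1}$, the same differential homomorphisms $\varphi_p(\Lambda_i) = p^{(i)}(b)$, and I set $w = a + c\Lambda_0$ so that $\varphi_p(w) = a + c\,p(b)$. Working over $K = k(\Lambda_0, \Lambda_1, \ldots)$, I take the minimal relation $Q$ for $w, \ldots, w^{(n)}$ subject to the three minimality conditions as before.

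The two auxiliary lemmas ($N = n$ and $\frac{\partial}{\partial \Lambda_n} Q \neq 0$) and the derivative computation carry over unchanged. The only arithmetic difference is that the coefficient of $\Lambda_n$ in $w^{(n)}$ is now $c\,(b^{\prime})^n$ rather than $(b^{\prime})^n$; since $c \neq 0$ and $b^{\prime} \neq 0$ this is still nonzero, so the triangular change of variables relating $(\Lambda_0, \ldots, \Lambda_n)$ to $(w, \ldots, w^{(n)})$ over $E$ stays invertible, $w^{(n)}$ remains transcendental over $k(w, \ldots, w^{(n-1)}, \Lambda_0, \ldots, \Lambda_{n-1})$, and the rewriting of $T = \sum_{i=0}^n \Lambda_{i+1}\frac{\partial}{\partial \Lambda_i} Q$ as a nonzero differential polynomial in $\Lambda_0$ over $E$ goes through. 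Applying Lemma~\ref{lem:nonzero} to $E$ with the derivation $\tilde{D} = \frac{1}{b^{\prime}}D$ and the nonconstant $b$ then produces $p(x) \in \mathbb{Q}[x]$ with $\varphi_p(T) \neq 0$, which forces $b$ to be algebraic over $L := k\langle a + c\,p(b)\rangle$.

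The step where I expect to do more than just copy Theorem~\ref{th:density} is the very end. There one finishes by writing $a = (a + p(b)) - p(b)$, where $p(b)$ is algebraic over $L$ because $b$ is; here the analogous identity is $a = (a + c\,p(b)) - c\,p(b)$, and I now also need $c$ (equivalently $a$) to be algebraic over $L$, which is \emph{not} automatic for a general $c \in k\langle a, b\rangle$. The hard part is therefore to promote ``$b$ algebraic over $L$'' to ``$\trdeg_k L = \trdeg_k E$''. My proposed remedy is to use that not only $u := a + c\,p(b)$ but all of $u, u^{\prime}, u^{\prime\prime}, \ldots$ lie in $L$, while $b$ and its derivatives lie in the relative algebraic closure of $L$: differentiating $u = a + c\,p(b)$ repeatedly gives a system of relations over $k\langle u, b\rangle$ that, for a sufficiently generic $p$, can be solved for $a$ (as in the explicit computations one checks that already $u, u^{\prime}$ suffice to recover $a$ in favorable cases). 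The crux is thus a nondegeneracy condition of Wronskian/Jacobian type guaranteeing this solvability, of the same flavor as Lemma~\ref{lem:nonzero}; securing it, rather than the transfer of the main construction, is the genuine obstacle, and once it holds $a$ is algebraic over $L$, whence $E$ is algebraic over $L$ and $\trdeg_k k\langle a + c\,p(b)\rangle = \trdeg_k k\langle a, b\rangle$.
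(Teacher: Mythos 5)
Your transfer of the main construction is correct, and it matches what the paper intends by ``exactly the same argument'': with $w = a + c\Lambda_0$ (and $c \neq 0$) the lemmas $N = n$ and $\frac{\partial}{\partial\Lambda_n}Q \neq 0$, the computation of $T$, and the application of Lemma~\ref{lem:nonzero} with $\tilde{D} = \frac{1}{b^{\prime}}D$ all survive, yielding $p$ with $b$ algebraic over $L = k\langle a + c\,p(b)\rangle$. The problem is the step you yourself flag and then leave open: promoting ``$b$ algebraic over $L$'' to ``$\trdeg_k L = \trdeg_k E$''. This is not a cosmetic loose end, and it cannot be extracted from $b$'s algebraicity alone. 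Take $c = b - a \in k\langle a, b\rangle$ and the constant polynomial $p = 1$: then $a + c\,p(b) = b$, so $b$ is trivially algebraic over $L = k\langle b\rangle$, yet $\trdeg_k k\langle b \rangle < \trdeg_k E$ whenever $a$ is not algebraic over $k\langle b\rangle$ (e.g.\ $E = \mathbb{Q}(x,t)$, $x^{\prime} = 0$, $t^{\prime} = 1$, $a = x$, $b = t$). This does not contradict the corollary (other $p$ work there), and such degenerate $p$ may well violate $\varphi_p(T) \neq 0$; but nothing in your argument proves that, and the only fact you use about the selected $p$ is precisely the one the example shows to be insufficient. So the proposal names the needed nondegeneracy condition (``of Wronskian/Jacobian type'') but never formulates it nor shows it can be met; as a proof it is incomplete exactly at its decisive point.

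The missing ingredient can be supplied with the paper's own toolkit. Work over $F = k\langle b\rangle$ and first show that $a$ is algebraic over $G = F\langle \Lambda_0,\, a + c\Lambda_0\rangle$: if not, there is a differential isomorphism with respect to (the relative algebraic closure of) $G$, in the sense of Lemma~\ref{lem:invariant}, carrying $a$ to some $a^{*} \neq a$; applying it to $a + c\Lambda_0 \in G$ gives $a - a^{*} = (c^{*} - c)\Lambda_0$, so either $c^{*} = c$ (forcing $a^{*} = a$, a contradiction) or $\Lambda_0$, and hence every $\Lambda_i$, lies in the compositum $F\langle a\rangle \cdot F\langle a^{*}\rangle$, whose transcendence degree over $F$ is at most $2\trdeg_F E < \infty$ --- contradicting the algebraic independence of the $\Lambda_i$ over $E$. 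Writing this algebraicity of $a$ as a polynomial relation and clearing denominators produces a second nonzero polynomial $T_2$ in the $\Lambda_i$ over $E$ such that $\varphi_p(T_2) \neq 0$ forces $a$ to be algebraic over $k\langle b,\, a + c\,p(b)\rangle$. Applying Lemma~\ref{lem:nonzero} to the product $T \cdot T_2$ then gives a single $p$ for which $b$ is algebraic over $L$ and $a$ is algebraic over $L\langle b \rangle$, whence $\trdeg_k L = \trdeg_k E$. Note that the paper's one-sentence justification of Corollary~\ref{cor:density} silently skips the same point, so your diagnosis of where the difficulty sits is sound; what is missing is the argument that resolves it.
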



\begin{proof}[Proof of Theorem \ref{th:primitive}.]

Due to Theorem \ref{th:density} there exists $a\in E$ such that $\trdeg_k E = \trdeg_k k\langle a \rangle = n$. 
Since $\dim_{k\langle a \rangle} E < \infty$ there exists $b\in E$ such that $E = k\langle a, b\rangle$. 
We are going to find $\lambda_1, \ldots, \lambda_{n + 2} \in k$ such that $E = k\langle b + \lambda_1 a + \lambda_2 a^2 + \ldots + \lambda_{n + 2} a^{n + 2}\rangle$.






We will use the method used by Kolchin in \cite[p.729]{kolchin}.
Let us recall necessary definitions.

Let $K_1$ be a differential extension field of $L$. By an \textit{isomorphism  of $K_1$ with respect to $L$} we will mean an isomorphic mapping of $K_1$ onto a differential field $K_2$ such that
\begin{enumerate}
    \item $K_2$ is an extension of $L$;
    \item the isomorphic mapping leaves each element of $L$ invariant;
\end{enumerate}

\begin{lemma}[Kolchin, (\cite{kolchin}, p.726)] \label{lem:invariant}
    Let $E$ be an extension of $F$ and $\gamma \in E$. 
    A necessary and sufficent condition for $E = F\langle \gamma \rangle$ is that no isomorphism of $E$ with respect to $F$ other then the identity leaves $\gamma$ invariant.
\end{lemma}


{Let $I_0 = \{f(x, y) \in k\{x, y\} \mid f(a, b) = 0\}$ be the ideal of differential polynomials vanishing at $(a, b)$.} 
We will show that there exist elements $\lambda_1, \ldots, \lambda_n$ such that $z = y + \lambda_1 x + \ldots + \lambda_n x^n$ takes different values for different {generic} solutions of $I_0$ {(see \cite[p.725]{kolchin})}. 
Then, certainly $z$ will satisfy the requirements on $\gamma$ from Lemma \ref{lem:invariant}.

To prove this statement, let $t_1, \ldots, t_{n + 2}$ be new indeterminates, and, in $E\{x, y, t_1, \ldots, t_{n + 2}\}$, consider the perfect differential ideal (for definitions see \cite[p.2, p.7]{ritt}) 

$$I = \{ {I_0}, t_1^{\prime}, \ldots, t_{n + 2}^{\prime}, b - y + t_1(a - x) + t_2(a^2 - x^2) + \ldots + t_{n + 2}(a^{n + 2} - x^{n + 2}) \}$$

Let $I = \mathfrak{p}_1 \cap \ldots \cap \mathfrak{p}_s$ be the decomposition of $I$ into essential prime differential ideals (see \cite[p.13]{ritt}), and suppose the subscripts have been assigned so that {for every $1 \leqslant i \leqslant r$ one of the following conditions hold
\begin{enumerate}[label=(C\arabic*)]
    \item\label{cond:same} $a - x, b - y \in \mathfrak{p}_i$;
    \item\label{cond:nongeneric} $k\{x, y\}\cap \mathfrak{p}_i \neq I_0$.
\end{enumerate}
Consider $\mathfrak{p}_i$ with $i \leqslant r$ and its solution $(\overline{x}, \overline{y}, \overline{t}_1, \ldots, \overline{t}_{n + 2})$.
If~\ref{cond:same} holds, then $(a, b) = (\overline{x}, \overline{y})$.
If~\ref{cond:nongeneric} holds, $(\overline{x}, \overline{y})$ is not a generic solution of $I_0$.
}
Consider $\mathfrak{p}_j$ with $j > r$. 
If $b - y \notin \mathfrak{p}_j$, then also $a - x \notin \mathfrak{p}_j$. 
Thus, $a - x \notin \mathfrak{p}_j$ for all $j > r$. 
Let $\overline{x}, \overline{y}, \overline{t}_1, \ldots, \overline{t}_n$ be a generic solution of $\mathfrak{p}_j$.
{Then $(\overline{x}, \overline{y})$ is a generic solution of $I_0$.
In particular, $\overline{y}$ is algebraic over $k\langle \overline{x} \rangle$.}
Differentiating the equation 

$$
b -\overline{y} + \overline{t}_1(a - \overline{x}) + \overline{t}_2(a^2 - \overline{x}^2) + \ldots + \overline{t}_n(a^n - \overline{x}^n) = 0
$$
$n + 1$ times, we obtain a system of linear equations in $\overline{t}_1, \ldots, \overline{t}_{n + 2}$. Let us investigate it.

Let us denote by $\wronsk(f_1, \ldots, f_N)$ the wronskian of $f_1$, $\ldots$, $f_N$ (see \cite[chap. 2]{magid}). 





%


Let $W_{k, l}(x, y)$ be given by $\wronsk(x - y, \ldots, \widehat{x^l - y^l}, \ldots, x^{k + 1} - y^{k + 1})$ where $k \geqslant 2$ and $1 \leqslant l \leqslant k + 1$.

\begin{lemma}\label{lem:trdeg_estim1}
    If $W_{k, l}(a, \overline{x}) = 0$ for all $1 \leqslant l \leqslant k + 1$, then $\trdeg_k k\langle a, \overline{x} \rangle \leqslant n + k - 2$.
\end{lemma}

\begin{proof}

    Let $x$ and $y$ be differential indeterminates. 
    First of all, we are going to establish several properties of differential polynomials $W_{k, l}(x, y)$.

    \begin{lemma}
        $W_{k, l}(x, y) = A_l(x, y) + x^{(k - 1)}B_l(x, y) + y^{(k - 1)}C_l(x, y)$ where $A_l, B_l, C_l \in \mathbb{Q}[x, \ldots, x^{(k - 2)}, y, \ldots, y^{(k - 2)}]$. 
        Moreover, if $k \geqslant 3$, then $B_l(x, y) = -y^{\prime}D_l(x, y)$ and $C_l(x, y) = x^{\prime}D_l(x, y)$ where $D_l \in \mathbb{Q}[x, \ldots, x^{(k - 2)}, y, \ldots, y^{(k - 2)}]$.
    \end{lemma}

    \begin{proof}
        For the sake of simplicity let us consider $l = k + 1$.
        The proof for the other cases is analogous.
        The last row of the corresponding matrix is a sum of three rows: $x^{(k - 1)}(1, 2x, \ldots, kx^{k - 1})$, $-y^{(k - 1)}(1, 2y, \ldots, y^{k - 1})$ and $(a_1, \ldots, a_k)$ where $a_i \in \mathbb{Q}[x, \ldots, x^{(k - 2)}, y, \ldots, y^{(k - 2)}]$ for all $i$.
        Thus, the determinant $W_{k, k + 1}(x, y)$ can be expressed as a sum:

        \resizebox{\hsize}{!}{$
        \begin{vmatrix}
            x - y                     & \ldots & x^k - y^k                        \\
            \vdots                    & \ddots & \vdots                           \\
            (x - y)^{(k - 2)}         & \ldots & \left(x^k - y^k\right)^{(k - 2)} \\
            a_1                       & \ldots & a_k
        \end{vmatrix}
        +
        x^{(k - 1)}
        \begin{vmatrix}
            x - y                     & \ldots & x^k - y^k                        \\
            \vdots                    & \ddots & \vdots                           \\
            (x - y)^{(k - 2)}         & \ldots & \left(x^k - y^k\right)^{(k - 2)} \\
            1                         & \ldots & kx^{k - 1}
        \end{vmatrix}
        -
        y^{(k - 1)}
        \begin{vmatrix}
            x - y                     & \ldots & x^k - y^k                        \\
            \vdots                    & \ddots & \vdots                           \\
            (x - y)^{(k - 2)}         & \ldots & \left(x^k - y^k\right)^{(k - 2)} \\
            1                         & \ldots & ky^{k - 1}
        \end{vmatrix}
        $}

        The above equality proves the first statement of the lemma.

        Now let $k \geqslant 3$.
        The second row of the corresponding matrix is a sum of $x^{\prime}(1, 2x, \ldots, kx^{k - 1})$ and $-y^{\prime}(1, 2y, \ldots, ky^{k - 1})$. Hence, substracting the last row from the second, we obtain:

        $$
        B_l = 
        \begin{vmatrix}
            x - y                   & \ldots & x^k - y^k                                   \\
            x^{\prime} - y^{\prime} & \ldots & x^{\prime}kx^{k - 1} - y^{\prime}ky^{k - 1} \\
            \vdots                  & \ddots & \vdots                                      \\
            1                       & \ldots & kx^{k - 1}
        \end{vmatrix}
        =
        -y^{\prime}
        \begin{vmatrix}
            x - y                   & \ldots & x^k - y^k                                   \\
            1                       & \ldots & ky^{k - 1}                                  \\
            \vdots                  & \ddots & \vdots                                      \\
            1                       & \ldots & kx^{k - 1}
        \end{vmatrix}
        $$

        Let us denote the latter determinant by $D_l$.
        Then, $B_l = -y^{\prime}D_l$.
        Likewise, $C_l = x^{\prime}D_l$, so we are done.
    \end{proof}


    \begin{lemma}
        \label{lem:cramer}
        At least one of $\frac{W_{k, 1}(x, y)}{W_{k, k + 1}(x, y)}$, $\ldots$, $\frac{W_{k, k}(x, y)}{W_{k, k + 1}(x, y)}$ depends on $x^{(k - 1)}$ and $y^{(k - 1)}$.
    \end{lemma}
    \begin{proof}
        Since all these differential polynomials are symmetric in $x$ and $y$, it suffices to prove that at least one of them depends on either $x^{(k - 1)}$ or $y^{(k - 1)}$.
        Assume the contrary, that $(-1)^{k - l}\frac{W_{k, l}(x, y)}{W_{k, k + 1}(x, y)} \in \mathbb{Q}(x, \ldots, x^{(k - 2)}, y, \ldots, y^{(k - 2)})$.
        By the Cramer's rule these fractions are solutions of the following system of linear equations in $\alpha_1, \ldots, \alpha_k$:

        \begin{equation}
        \label{eq:system}
        \tag{*}
        \begin{pmatrix}
            x - y                     & \ldots & x^k - y^k                          \\
            \vdots                    & \ddots & \vdots                             \\
            (x - y)^{(k - 1)} & \ldots & \left( x^k - y^k \right)^{(k - 1)}
        \end{pmatrix}
        \begin{pmatrix}
            \alpha_1 \\
            \vdots   \\
            \alpha_k
        \end{pmatrix}
        =
        \begin{pmatrix}
            x^{k + 1} - y^{k + 1}                          \\
            \vdots                                         \\
            \left( x^{k + 1} - y^{k + 1} \right)^{(k - 1)}
        \end{pmatrix}
        \end{equation}

        Since $\alpha_1, \ldots, \alpha_k \in \mathbb{Q}(x, \ldots, x^{(k - 2)}, y, \ldots, y^{(k - 2)})$ and both $x^{(k - 1)}$ and $y^{(k - 1)}$ are transcendental over this field, the last equation also implies two following equalities:

        \begin{equation}
        \label{eq:cons}
        \tag{**}
        \begin{cases}
            \alpha_1 + 2x\alpha_2 + \ldots + kx^{k - 1}\alpha_k = (k + 1)x^{k} \\
            \alpha_1 + 2y\alpha_2 + \ldots + ky^{k - 1}\alpha_k = (k + 1)y^{k}
        \end{cases}
        \end{equation}

        We are going to assign values from a particular differential field to $x$ and $y$. 
        Precisely, let $\mathbb{C}(t)$ be a field of rational functions equipped with standard derivation ($t^{\prime} = 1$) and $\xi$ be a primitive $(k + 1)$-th root of unity. 
        Let $x = t$ and $y = \xi t$. 
        The matrix of the system (\ref{eq:system}) is nondegenerate, because its determinant equals $\wronsk\left( (1 - \xi)t, \ldots, (1 - \xi^k)t^k \right)$, which is nonzero since $(1 - \xi)t$, $\ldots$, $(1 - \xi^k)t^k$ are linearly independent over constants (\cite[prop. 2.8]{magid}). 
        Clearly, the unique solution of the system (\ref{eq:system}) in this case is $\alpha_1 = \ldots = \alpha_k = 0$. 
        But the equalities (\ref{eq:cons}) do not hold. 
        This contradiction proves the lemma.
    \end{proof}

    \begin{corollary}
        \label{cor:alg_relation}
        If $k \geqslant 3$ then there exists $1 \leqslant l \leqslant k$ such that $W_{k, l}(x, y)D_{k + 1}(x, y) - W_{k, k + 1}(x, y)D_l(x, y) = A_l(x, y)D_{k + 1}(x, y) - A_{k + 1}(x, y)D_l(x, y) \neq 0$.
    \end{corollary}
    \begin{proof}
        By the Lemma \ref{lem:cramer} there exists $1 \leqslant l \leqslant k$ such that $\frac{W_{k, l}(x, y)}{W_{k, k + 1}(x, y)}$ depends on $x^{(k - 1)}$ and $y^{(k - 1)}$. 
        This means that vectors $(A_l, B_l, C_l)$ and $(A_{k + 1}, B_{k + 1}, C_{k + 1})$ are not proportional. 
        Thus, $D_{k + 1}(A_l, B_l, C_l) - D_l(A_{k + 1}, B_{k + 1}, C_{k + 1}) = (A_lD_{k + 1} - A_{k + 1}D_l, 0, 0) \neq 0$.
    \end{proof}

    We return to the proof of Lemma~\ref{lem:trdeg_estim1}.

    Let us consider two cases:
    \begin{enumerate}
        \item $k \leqslant 3$. 
            Let $l$ be the index from the corollary \ref{cor:alg_relation}. 
            Since $W_{k, l}(a, \overline{x}) = W_{k, k + 1}(a, \overline{x}) = 0$, $W_{k, l}(a, \overline{x})D_{k + 1}(a, \overline{x}) - W_{k, k + 1}(a, \overline{x})D_l(a, \overline{x})$ provides us an algebraic relation between $a$, $\ldots$, $a^{(k - 2)}$, $\overline{x}$, $\ldots$, $\overline{x}^{(k - 2)}$. 
            Hence, $\overline{x}^{(j)}$ is algebraic over $k(a, \ldots, a^{(n - 1)}, \overline{x}, \ldots, \overline{x}^{(k - 3)})$ for all $j$.
            Thus, $k\langle a, \overline{x} \rangle$ is algebraic over $k(a, \ldots, a^{(n - 1)}, \overline{x}, \ldots, \overline{x}^{(k - 3)})$.
            Since $\trdeg_k k(a, \ldots, a^{(n - 1)}, \overline{x}, \ldots, \overline{x}^{(k - 3)}) \leqslant n + k - 2$, we are done.

        \item $k = 2$.
            In this case both $W_{2, 2}(x, y)$ and $W_{2, 3}(x, y)$ can be computed directly:

            $$W_{2, 3}(x, y) = (x - y)^2(x^{\prime} + y^{\prime})$$

            $$W_{2, 2}(x, y) = (x - y)( x^{\prime}(2x^2 - xy - y^2) + y^{\prime}(x^2 + xy - 2y^2) )$$

            If both $W_{2, 3}(a, \overline{x})$ and $W_{2, 2}(a, \overline{x})$ vanish, either $a^{\prime} = \overline{x}^{\prime} = 0$ or the determinant of the system of linear equations $W_{2, 3}(a, \overline{x}) = W_{2, 2}(a, \overline{x}) = 0$ in variables $a^{\prime}$ and $\overline{x}^{\prime}$ vanishes, i.e. $-(a - \overline{x})^5 = 0$. Both cases are impossible since $a \neq \overline{x}$.
    \end{enumerate}

\end{proof}


\begin{lemma}\label{lem:trdeg_estim2}
$\trdeg_k k\langle a, b, \overline{x}, \overline{y}, \overline{t}_1, \ldots, \overline{t}_{n + 2}\rangle \leqslant 2n + 1$.
\end{lemma}

\begin{proof}
Differentiating the equation $\overline{y} - b = \overline{t}_1(a - \overline{x}) + \ldots + \overline{t}_{n + 2}(a^{n + 2} - \overline{x}^{n + 2})$, we obtain the following matrix equality:

    $$
    \begin{pmatrix}
        \overline{y} - b \\
        \overline{y}^{\prime} - b^{\prime} \\
        \vdots \\
        \overline{y}^{(n + 1)} - b^{(n + 1)}
    \end{pmatrix}
    =
    \begin{pmatrix}
        a - \overline{x}                     & \ldots & a^{n + 2} - \overline{x}^{n + 2}                         \\
        a^{\prime} - \overline{x}^{\prime}   & \ldots & \left(a^{n + 2} - \overline{x}^{n + 2} \right)^{\prime}  \\
        \vdots                               & \ddots & \vdots                                                   \\
        a^{(n + 1)} - \overline{x}^{(n + 1)} & \ldots & \left(a^{n + 2} - \overline{x}^{n + 2} \right)^{(n + 1)}
    \end{pmatrix}
    \begin{pmatrix}
        \overline{t}_1      \\
        \overline{t}_2      \\
        \vdots              \\
        \overline{t}_{n + 2}
    \end{pmatrix}
    $$

    Let $k$ be a minimal number such that for all $1 \leqslant l \leqslant k + 1$ the equality $W_{k, l}(a, \overline{x}) = 0$ holds.
    Let us consider two cases:
    \begin{enumerate}
        \item $k < n + 2$. Thus, at least one of $(k - 1) \times (k - 1)$ minors of the matrix:

        $$
        \begin{pmatrix}
            a - \overline{x}                     & \ldots & a^{n + 2} - \overline{x}^{n + 2}                          \\
            \ldots                               & \ddots & \vdots                                                    \\
            a^{(k - 2)} - \overline{x}^{(k - 2)} & \ldots & \left( a^{n + 2} - \overline{x}^{n + 2} \right)^{(k - 2)}
        \end{pmatrix}
        $$

    is nondegenerate. Let $W_{k - 1, l}(a, \overline{x}) \neq 0$. Multiplying by the inverse matrix, we obtain the formulas which express $\overline{t}_j$ as a rational function in $a$, $b$, $\overline{x}$, $\overline{y}$ and their derivations, $\overline{t}_l$, $\overline{t}_{k + 1}, \ldots, \overline{t}_{n + 2}$ for all $1 \leqslant j \leqslant k$ and $j \neq l$.

    Hence, $\overline{t}_1, \ldots, \overline{t}_{l - 1}, \overline{t}_{l + 1}, \ldots, \overline{t}_k \in k\langle a, b, \overline{x}, \overline{y} \rangle(\overline{t}_l, \overline{t}_{k + 1},\ldots, \overline{t}_{n + 2})$. 
    By Lemma \ref{lem:trdeg_estim1}, $\trdeg_k k\langle a, b, \overline{x}, \overline{y}\rangle \leqslant n + k - 2$. 
    Thus

        $$
        \trdeg_k k\langle a, b, \overline{x}, \overline{y}, \overline{t}_1, \ldots, \overline{t}_{n + 2} \rangle \leqslant
        \trdeg_k k\langle a, b, \overline{x}, \overline{y} \rangle + n - k + 3 \leqslant
        (n + k - 2) + (n - k + 3) = 2n + 1
        $$

    \item $k \geqslant n + 2$. 
    In this case, $\trdeg_k k\langle a, b, \overline{x}, \overline{y} \rangle \leqslant 2n$. There exists $1 \leqslant l \leqslant n + 2$ such that $W_{n + 1, l}(a, \overline{x}) \neq 0$. By the same argument as above $\overline{t}_1, \ldots, \overline{t}_{l - 1}, \overline{t}_{l + 1}, \ldots, \overline{t}_{n + 2} \in k\langle a, b, \overline{x}, \overline{y} \rangle(\overline{t}_l)$. The desired inequality is now obvious.  

    \end{enumerate}
\end{proof}


Lemma \ref{lem:trdeg_estim2} implies that $\overline{t}_1$, $\ldots$, $\overline{t}_{n + 2}$ are algebraically dependent over $k\langle a, b\rangle$.
Let us denote this dependence by $P_j(t_1, \ldots, t_{n + 2}) \in E[t_1, \ldots t_{n + 2}]$.
Consider the polynomial $P = P_{r + 1} \cdot \ldots \cdot P_s$.
Let $\lambda_1, \ldots, \lambda_{n + 2} \in k$ satisfy $P(\lambda_1, \ldots, \lambda_{n + 2}) \neq 0$.
Then, $b - y + \lambda_1(a - x) + \ldots + \lambda_{n + 2}(a^{n + 2} - a^{n + 2}) \neq 0$ for any {generic} solution of $I_0$ other than $(b, a)$. 
Therefore, the proof of the theorem is complete.

\end{proof}

The author is grateful to Dmitry Trushin, Yu.P. Razmyslow, and Lei Fu for useful discussions.

\end{document}